\ifx\pdfoutput\undefined\else\pdfoutput=1\fi
\documentclass[9pt,a4paper,oneside,reqno]{amsart} 
\usepackage[english]{babel}
\usepackage[p,osf]{cochineal}
\usepackage[scale=.95,type1]{cabin}
\usepackage[zerostyle=c,scaled=.94]{newtxtt}
\usepackage[T1]{fontenc} 
\usepackage[utf8]{inputenc} 
\usepackage{amsthm}

\usepackage{amsmath}
\usepackage{amssymb}
\usepackage{pgfplots} 
\pgfplotsset{compat=newest} 
\usepgfplotslibrary{fillbetween,colorbrewer}
\usepackage{amsfonts}
\usepackage{amsaddr}
\usepackage{xspace,subcaption}
\usepackage{enumitem} 
\usepackage{csquotes}
\usepackage{xcolor}
\usepackage[colorlinks]{hyperref}
\pdfstringdefDisableCommands{\def\eqref#1{(\ref{#1})}}
\hypersetup{colorlinks, breaklinks, citecolor=teal,filecolor=black}
\usepackage[nosort,capitalise]{cleveref}
\crefname{enumi}{}{}
\crefname{equation}{}{}
\usepackage{graphicx}
\usepackage{mathtools}
\mathtoolsset{centercolon}
\usepackage{bm}
\usepackage{pgfkeys}
\usepackage{pgf,interval}
\intervalconfig{soft open fences}

\DeclareMathOperator{\Tr}{Tr}
\DeclareMathOperator{\erfc}{erfc}

\DeclareMathOperator{\supp}{supp}

\DeclareMathOperator{\E}{\mathbf{E}}
\DeclareMathOperator{\Prob}{\mathbf{P}}

\DeclareMathOperator{\Spec}{Spec}

\DeclareMathOperator{\dett}{det_2}

\newcommand{\ov}{\overline}
\newcommand{\ii}{\mathrm{i}}

\newcommand{\F}{\mathbf{F}}

\ifdefined\C
\renewcommand{\C}{\mathbf{C}}
\else
\newcommand{\C}{\mathbf{C}}
\fi
\newcommand{\HC}{\mathbf{H}}

\newcommand{\vx}{\bm{x}}

\newcommand{\vy}{\bm{y}}

\newcommand{\wt}{\widetilde}

\newcommand{\R}{\mathbf{R}}
\newcommand{\N}{\mathbf{N}}

\newcommand{\cO}{\mathcal{O}}
\newcommand{\co}{{\scriptstyle\mathcal{O}}}

\newcommand{\dif}{\operatorname{d}\!{}}
\DeclarePairedDelimiter{\abs}{\lvert}{\rvert}%
\DeclarePairedDelimiter{\norm}{\lVert}{\rVert}%
\providecommand\given{}
\newcommand\SetSymbol[1][]{\nonscript\:#1\vert\allowbreak\nonscript\:\mathopen{}}
\DeclarePairedDelimiterX{\tuple}[1](){\renewcommand\given{\SetSymbol[\delimsize]}#1}
\DeclarePairedDelimiterX{\set}[1]\{\}{\renewcommand\given{\SetSymbol[\delimsize]}#1}
\DeclarePairedDelimiterXPP{\landauO}[1]{\cO}(){}{#1}
\DeclarePairedDelimiterXPP{\landauo}[1]{\co}(){}{#1}
\DeclarePairedDelimiterXPP{\landauOprec}[1]{\cO_\prec}(){}{#1}

\usepackage[giveninits=true,url=false,doi=false,isbn=false,eprint=true,datamodel=mrnumber,sorting=nty,maxcitenames=4,maxbibnames=99,backref=false,block=space,backend=biber,bibstyle=phys]{biblatex} 
\AtEveryBibitem{\clearfield{month}}
\AtEveryCitekey{\clearfield{month}} 
\renewbibmacro{in:}{}
\ExecuteBibliographyOptions{eprint=true}
\DeclareFieldFormat[article]{title}{\emph{#1}} 
\DeclareFieldFormat{mrnumber}{\ifhyperref{\href{http://www.ams.org/mathscinet-getitem?mr=#1}{\nolinkurl{MR#1}}}{\nolinkurl{#1}}}
\DeclareFieldFormat{pmid}{\ifhyperref{\href{https://www.ncbi.nlm.nih.gov/pubmed/#1}{\nolinkurl{PMID#1}}}{\nolinkurl{#1}}}
\DeclareFieldFormat{eprint}{\ifhyperref{\href{https://arxiv.org/abs/#1}{\nolinkurl{arXiv:#1}}}{\nolinkurl{#1}}}
\renewbibmacro*{doi+eprint+url}{%
  \iftoggle{bbx:doi}{\printfield{doi}}{}
  \newunit\newblock%
  \printfield{mrnumber}%
  \newunit\newblock%
  \printfield{pmid}%
  \newunit\newblock%
  \printfield{eprint}%
  \iftoggle{bbx:url}{\usebibmacro{url+urldate}}{}}
\addbibresource{references.bib}
\hyphenation{} 
\author{Giorgio Cipolloni}
\address{Princeton Center for Theoretical Science, Princeton University}
\author{L\'aszl\'o Erd\H{o}s$^{\dagger}$ \and Yuanyuan Xu$^{\dagger}$}
\address{IST Austria, Am Campus 1, A-3400 Klosterneuburg, Austria}
\author{Dominik Schr\"oder$^{\ddagger}$}
\address{Institute for Theoretical Studies, ETH Zurich, Clausiusstrasse 47, 8092 Zurich, Switzerland}
\email{dschroeder@ethz.ch}
\email{gc4233@princeton.edu}
\email{lerdos@ist.ac.at} 
\email{yuanyuan.xu@ist.ac.at}
\thanks{$^\dagger$Supported by the ERC Advanced Grant ``RMTBeyond'' No.~101020331}
\thanks{$^\ddagger$Supported by Dr.\ Max R\"ossler, the Walter Haefner Foundation and the ETH Z\"urich Foundation}
\subjclass[2010]{60B20, 15B52} 
\keywords{Circular law, Ginibre, Extremal statistics, Gumbel}
\title{Directional Extremal Statistics for Ginibre Eigenvalues}
\date{\today}

\newtheorem{theorem}{Theorem}
\newtheorem{definition}[theorem]{Definition}

\newtheorem{proposition}[theorem]{Proposition}
\newtheorem{lemma}[theorem]{Lemma}

\begin{document}
\thispagestyle{empty}
 
 \newcommand{\cob}{\color{blue}}
 \newcommand{\nc}{\normalcolor}

\begin{abstract} We consider the eigenvalues of a large dimensional real or complex Ginibre matrix  in 
    the region of the complex plane where their real parts reach their maximum value. %
     This maximum  follows
    the Gumbel distribution  and that these extreme eigenvalues  form a Poisson point process,
    asymptotically as the dimension 
    tends to infinity. In the complex case these facts have already been established by Bender~\cite{MR2594353} and
    in the real case by Akemann and Phillips~\cite{MR3192169} even for the more general elliptic ensemble
    with a sophisticated saddle point analysis. The purpose of this note is to give a very short direct proof in the Ginibre
    case  with an effective error term.  
     Moreover, our estimates on the correlation kernel in this regime
    serve as a key input for  accurately locating  $\max\Re\Spec(X)$
    for any large matrix $X$ with i.i.d.\ entries in the companion paper~\cite{2206.04448}.
\end{abstract}
\maketitle

\section{Introduction} 

The Ginibre matrix ensemble~\cite{MR173726} is the simplest and most commonly used prototype of non-Hermitian random matrices.
It consists of $n\times n$ matrices $X$ with
independent, identically distributed (i.i.d.) Gaussian entries $x_{ij}$. We use the normalization $\E x_{ij}=0$, 
$\E \abs{x_{ij}}^2= \frac{1}{n}$, i.e. $\sqrt{n}x_{ij}$ is a standard real or complex normal  random variable.  Correspondingly,
we talk about real or complex Ginibre matrices.
The empirical density of eigenvalues converges to the uniform distribution on the unit disk in the complex
plane, known as \emph{Girko's circular law} and proven in increasing 
generality even without Gaussian assumption~\cite{MR773436,MR1428519,MR2409368},
while the spectral radius converges to 1~\cite{MR866352,MR863545,MR3813992,MR4408512}
with an explicit speed of convergence~\cite{MR4408013}.
For the Gaussian case, the eigenvalues form a determinantal  (or Pfaffian)
point process with an explicit correlation kernel $K_n(z, w)$
(see~\eqref{det} and~\eqref{realkernel} later). This kernel 
was computed by Ginibre in the complex case~\cite{MR173726} and later by Borodin and Sinclair for the more complicated real case~\cite{MR2530159,MR3537345} based upon earlier works on Pfaffian formulas~\cite{17930739, MR2371225} (some special cases have been  solved earlier~\cite{MR1121461, MR1437734, MR1231689,MR2439268,MR2185860}
 and see also~\cite{MR3380679} for a comprehensive summary of all known related kernels). 
While the eigenvalue distribution is rotationally symmetric in the complex case, 
the main complication in the real case stems from the fact that the real axis plays a special role, in 
fact there are many real eigenvalues~\cite{MR1231689}.   

The explicit formula for the eigenvalue correlation function allows one, in principle, to compute the distribution of any
interesting statistics of the eigenvalues. In reality, these calculations may require very precise asymptotic
analysis of certain special functions where  the complex  and real cases may differ substantially.
For example, the distribution of $\rho(X):= \max \abs{\Spec(X)}$,  the spectral radius of $X$ (i.e.\ 
the largest eigenvalue in modulus), can still be easily identified in the complex case by using 
Kostlan's observation~\cite{MR1148410} on the moduli of the complex Ginibre eigenvalues. The precise result,
stated in this form  in~\cite{MR1986426}, asserts that 
\begin{equation}\label{Crho}
    \rho(X) \stackrel{\text{d}}{=} 1 + \sqrt{\frac{\alpha_n}{4n}} + \frac{1}{\sqrt{4n\alpha_n}} G_n, \qquad \alpha_n:=\log n -2\log\log n-
    \log (2\pi),
\end{equation}
where $G_n$ converges in distribution to a standard \emph{Gumbel random variable}, i.e.
\[
\lim_{n\to \infty} \Prob (G_n\le t) = \exp{(-e^{-t})}
\]
for any fixed $t\in \R$. On the other hand, lacking radial symmetry, which is key element of Kostlan's observation, the analogous result for the real Ginibre  ensemble required a much more sophisticated analysis by Rider and Sinclair in~\cite{MR3211006}. They showed
that~\eqref{Crho} also holds for real case with the same
scaling factor $\alpha_n$, but $G_n$ 
converges to a slightly rescaled Gumbel law with distribution function $\exp{(-\frac{1}{2}e^{-t})}$.
The additional factor $1/2$ stems from the fact that the spectrum of a real Ginibre matrix is
symmetric with respect to the real axis. 

In the current paper we investigate a related quantity, the maximum real part of the spectrum of $X$,
where radial symmetry does not help even in the complex case.
It turns out that a similar %
asymptotics holds but with a new scaling  factor: 
\begin{equation}\label{Cmax}
    \max\Re \Spec(X) \stackrel{\text{d}}{=} 
    1 + \sqrt{\frac{\gamma}{4n}} + \frac{1}{\sqrt{4n\gamma}} G_n, \qquad \gamma=\gamma_n:=
    \frac{\log n-5\log\log n-\log (2\pi^4)}{2},
\end{equation}
with $G_n$ still converging to a Gumbel variable. More precisely: %
\begin{theorem}[Gumbel distribution]\label{thm gumbel}
    Let \(\sigma_1,\ldots,\sigma_n\) denote the eigenvalues of a real (\(\beta=1\)) 
    or complex (\(\beta=2\)) \(n\times n\) Ginibre matrix. Then for any fixed~\footnote{
    Our estimates actually  give a slightly weaker effective error for any $|t|\ll \sqrt{\log n}$}  \(t\in\R\) it holds that 
    \begin{equation}\label{gum}
        \begin{split}
            \Prob\Bigl(\max_i\Re\sigma_i< 1 + \sqrt{\frac{\gamma}{4n}}+\frac{t}{\sqrt{4\gamma n}} \Bigr)
            = \exp\Bigl(-\frac{\beta}{2}\exp(-t)\Bigr) + \landauO*{  \frac{ (\log\log n)^2}{\log n} }
        \end{split}
    \end{equation}
    as $n\to\infty$.
\end{theorem}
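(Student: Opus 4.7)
The plan is to exploit the determinantal (complex, $\beta=2$) or Pfaffian (real, $\beta=1$) structure of the Ginibre eigenvalue process, reducing the extremal statistic to a gap probability and then performing precise asymptotic analysis of the correlation kernel $K_n$ in the far-edge regime where the maximum lives. Setting $s_n(t):=1+\sqrt{\gamma/(4n)}+t/\sqrt{4\gamma n}$ and $A_t:=\set{z\in\C:\Re z\ge s_n(t)}$, the left-hand side of \eqref{gum} equals $\Prob(\cN_{A_t}=0)$, where $\cN_{A_t}$ denotes the number of eigenvalues in $A_t$. For $\beta=2$ this is the Fredholm determinant $\det(I-K_n\mathbf{1}_{A_t})$, and the expansion $-\log\det(I-K)=\sum_{k\ge1}k^{-1}\Tr K^k$ reduces matters to (i) the sharp asymptotics $\E\cN_{A_t}=\Tr(K_n\mathbf{1}_{A_t})=\tfrac{\beta}{2}e^{-t}+\landauO{(\log\log n)^2/\log n}$, and (ii) controlling the higher traces $\Tr((K_n\mathbf{1}_{A_t})^k)$ for $k\ge2$.

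Step~(i) is the main analytic task. For the complex kernel $K_n(z,z)=\tfrac{n}{\pi}e^{-n|z|^2}\sum_{k=0}^{n-1}(n|z|^2)^k/k!$, the first move is to replace the truncated Poisson tail by $\tfrac{1}{2}\erfc(\sqrt{n/2}(|z|^2-1))$ with an explicit effective error, and then parametrize $z=s_n(t)+(a+\ii b)/\sqrt{n}$ so that $\sqrt{n/2}(|z|^2-1)\approx \sqrt{\gamma/2}+t/(2\sqrt\gamma)+\sqrt{2}a+\landauO{1/\sqrt{\log n}}$. The large-argument asymptotic $\erfc(x)\sim e^{-x^2}/(x\sqrt\pi)$ converts the diagonal kernel into a Gaussian in $(a,b)$ whose integral over $A_t$ evaluates to exactly $\tfrac12 e^{-t}$ in the limit — the specific calibration $\gamma=(\log n-5\log\log n-\log(2\pi^4))/2$ is designed precisely to absorb all polynomial prefactors coming from the $1/(x\sqrt\pi)$ factor and the measure $\dif a\dif b$. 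Carrying the $\erfc$ and Poisson-tail expansions to one subleading order yields the claimed effective error rate.

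Step~(ii) is comparatively soft: the off-diagonal kernel $K_n(z,w)$ carries an additional factor $\exp(-n|z-w|^2/2)$ which makes $\Tr((K_n\mathbf{1}_{A_t})^k)$ much smaller than $(\Tr K_n\mathbf{1}_{A_t})^k$, so that the Fredholm series collapses to its leading term up to a multiplicative $1+\landauo{1}$ with quantitative rate. For $\beta=1$ the gap probability is the corresponding Pfaffian in the Borodin--Sinclair $2\times 2$ matrix kernel; one decomposes $A_t$ into a neighborhood of the real axis and its off-axis complement and runs the analogous edge analysis on each piece. The factor $\beta/2=1/2$ reflects conjugate symmetry: only upper-half-plane candidates are asymptotically independent, while real eigenvalues with large positive real part are exponentially rare at this scale and contribute negligibly. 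The principal obstacle is achieving the effective error $\landauO{(\log\log n)^2/\log n}$ rather than a bare limit: this forces tracking one sub-leading term in every $\erfc$/Poisson-tail expansion, and in the real case carefully verifying that the Pfaffian correction from the real-axis blocks stays within the same precision uniformly in $t$.
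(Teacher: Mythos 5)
Your proposal is correct and follows essentially the same route as the paper: reduce the gap probability to a Fredholm determinant (Pfaffian in the real case), compute $\Tr(K\chi_{A_t})$ to leading order via incomplete-Gamma/$\erfc$ asymptotics in the far-edge scaling window, and show the higher-order traces are negligible. The only cosmetic difference is that you invoke the expansion $-\log\det(1-K)=\sum_k\Tr K^k/k$ to control the tail, whereas the paper packages the same estimate via the regularized determinant bound $\abs{\det(1-K)-e^{-\Tr K}}\le\norm{K}_2\,e^{(\norm{K}_2+1)^2/2-\Tr K}$, which is slightly cleaner but not substantively different once one verifies $\norm{\sqrt g\wt K_n\sqrt g}_2\to 0$. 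For the real case the paper's implementation of your ``decompose near vs.\ away from the axis'' idea is to write $S_n(z,w)=\Phi_n(z,w)\wt K_n(z,w)$ with $\Phi_n(z,z)=1+\landauO{\min\{1,1/(n(\Im z)^2)\}}$, so that the trace computation reduces to the complex one up to a controlled factor, and to cite a separate result that the largest real eigenvalue is only $1+\landauO{n^{-1/2}}$, hence cannot compete with the complex eigenvalues at scale $1+\landauO{\sqrt{\log n/n}}$ — consistent with, and making precise, your heuristic that the real-axis block contributes negligibly.
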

In the complex case~\eqref{gum} as a  limit statement  was proven by Bender~\cite{MR2594353}
and in the real case by Akemann and Phillips in~\cite{MR3192169} even for the  more involved
elliptic Ginibre ensemble where the kernel $K_n$ is expressed by a contour integral 
(later it was extended to the  chiral two-matrix model with complex entries~\cite{MR2761338}).
Here we give a short alternative proof that also provides an effective estimate on the speed of convergence.

In Theorem~\ref{thm gumbel} we only considered the eigenvalue with the largest real part for simplicity,
however similar result holds for the largest eigenvalue in any chosen direction. More precisely, in the complex
case the distribution of  $\max_i \Re (e^{\ii \theta} \sigma_i)$ is independent of $\theta\in \R$  by rotational symmetry.
For  real Ginibre matrices and  for any fixed $\theta\ne 0 $ independent of $n$,
 $\max_i \Re (e^{\ii \theta} \sigma_i)$ still satisfies~\eqref{gum} but 
with $\beta=2$. Our proof can easily be extended to cover this more general case using 
 that the local eigenvalue correlation functions for real and complex Ginibre matrices
practically coincide away from the real axis.

As a motivation we remark that $\max \Re \Spec(X)$ is the basic quantity determining the exponential growth rate of the  long time asymptotics of 
the solution of the linear system of differential equations
\[
\frac{\dif}{\dif t}{\bf u}(t) = X {\bf u}(t).
\]
Starting from the pioneering work of May~\cite{4559589}
(see also the more recent review~\cite{Allesina2015}), 
this equation is frequently used in phenomenological models to describe the evolution of many
interacting agents  with random couplings
both in theoretical neuroscience~\cite{10039285,17155583} and in 
mathematical ecology~\cite{25768781,26198207}.

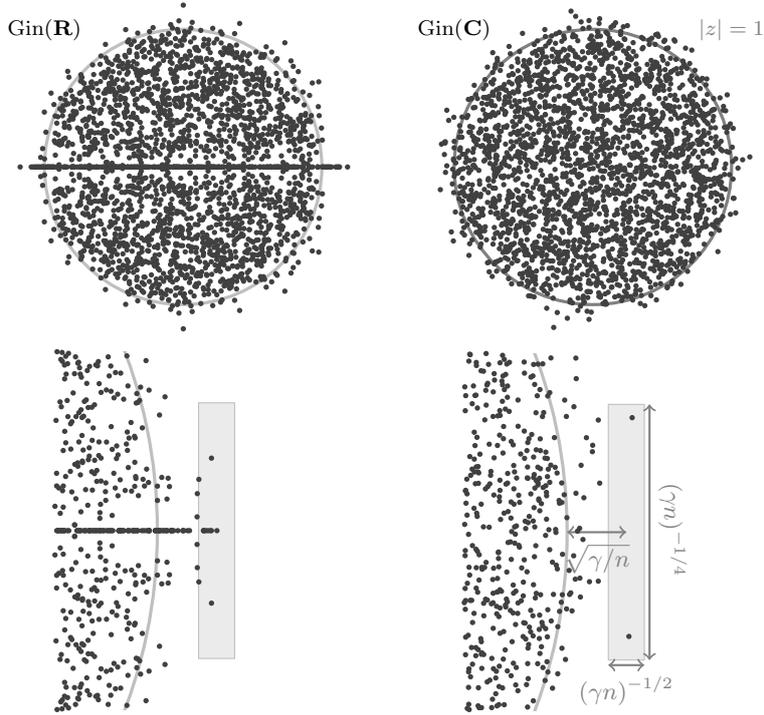
\begin{figure}
    \begin{tikzpicture}
    \begin{axis}[xmin=-1.3,xmax=1.3,ymin=-1.3,ymax=1.3,width=20em,height=20em,axis line style={white},ticks=none]
        \addplot [only marks,draw=none,mark size=1pt,mark options={draw opacity=0,fill=darkgray}] table [col sep=comma] {500r.csv};
        \draw(axis cs:0,0) [draw=lightgray,very thick] circle[radius=1];
        \node  at (axis cs:-1,1) {\small\(\mathrm{Gin}(\R)\)};
    \end{axis}
\end{tikzpicture}\qquad
\begin{tikzpicture}
    \begin{axis}[xmin=-1.3,xmax=1.3,ymin=-1.3,ymax=1.3,width=20em,height=20em,axis line style={white},ticks=none]
        \addplot [only marks,draw=none,mark size=1pt,mark options={draw opacity=0,fill=darkgray}] table [col sep=comma] {500c.csv};
        \draw(axis cs:0,0) [draw=gray,very thick] circle[radius=1];
        \node at (axis cs:1,1) {\small\color{gray}\(|z|=1\)};
        \node  at (axis cs:-1,1) {\small\(\mathrm{Gin}(\C)\)};
    \end{axis}
\end{tikzpicture}\\
\begin{tikzpicture}
    \begin{axis}[xmin=0.7,xmax=1.4,ymin=-0.35,ymax=0.35,width=20em,height=20em,ticks=none,axis line style={white}]
        \addplot [only marks,draw=none,mark size=1pt,mark options={draw opacity=0,fill=darkgray}] table [col sep=comma] {1000r.csv};
        \draw(axis cs:0,0) [draw=lightgray,very thick] circle[radius=1];
        \draw [fill=lightgray,fill opacity=.3,draw=lightgray] (axis cs:1.08,.25) -- (axis cs:1.15,.25) -- (axis cs:1.15,-.25) -- (axis cs:1.08,-.25) -- cycle;
    \end{axis}
\end{tikzpicture}\qquad 
\begin{tikzpicture}
    \begin{axis}[xmin=0.7,xmax=1.4,ymin=-0.35,ymax=0.35,width=20em,height=20em,axis line style={white},ticks=none]
        \addplot [only marks,draw=none,mark size=1pt,mark options={draw opacity=0,fill=darkgray}] table [col sep=comma] {1000c.csv}; 
        \draw [fill=lightgray,fill opacity=.3,draw=lightgray] (axis cs:1.08,.25) -- (axis cs:1.15,.25) -- (axis cs:1.15,-.25) -- (axis cs:1.08,-.25) -- cycle;
        \draw(axis cs:0,0) [draw=lightgray,very thick] circle[radius=1];
        \draw[<->,thick,black!50]
        (axis cs:1.16,.25) -- (axis cs:1.16,-.25) node[midway,above=1pt,rotate=270] {$(\gamma n)^{-1/4}$};
        \draw[<->,thick,black!50]
        (axis cs:1.08,-.26) -- (axis cs:1.15,-.26) node[midway,below=1pt] {$(\gamma n)^{-1/2}$};
        \draw[<->,thick,black!50]
        (axis cs:1.,0) -- (axis cs:1.115,0) node[midway,below=1pt] {$\sqrt{\gamma/n}$};
    \end{axis}
\end{tikzpicture}
\caption{The figure shows the eigenvalues of real and complex Ginibre matrices. The eigenvalues for the top figures have been computed for \(50\) independent Ginibre matrices of size \(50\times 50\), while for the bottom figure \(100\) independent matrices of size \(100\times 100\) have been sampled. Note that the eigenvalues of the real Ginibre matrix are symmetric with respect to the real axis, and that some (in fact \(\sim\sqrt{n}\)) eigenvalues are on the axis itself. Furthermore, the top left figure misleadingly hints that the rightmost eigenvalue is
real. This is a finite $n$ effect (see~\cite{MR3211006} for a detailed
discussion of this so called ``Saturn effect''); we actually prove (see~\crefrange{Gumbel real eq}{Gumbel real real eq}) that in the large $n$ limit the largest real eigenvalue is much smaller than the real part of the rightmost complex eigenvalue.}\label{fig circ law}
\end{figure}

The  appearance of the universal Gumbel distribution in~\eqref{Crho}--\eqref{Cmax} is typical for extreme value statistics of independent random variables as one of the three main cases described in
the Fisher-Tippet-Gnedenko theorem. While nearby Ginibre eigenvalues 
inside the unit disk are strongly correlated, the extreme eigenvalues 
are essentially independent which heuristically explains the Gumbel law. 
The key point is that  the correlation length of the eigenvalues is of order $n^{-1/2}$, 
as the scaling of the  Ginibre kernel $K_n(z, w)$ indicates, but in the extreme regime
the few eigenvalues that may contribute to $\rho(X)$ or $\max\Re \Spec(X)$  are much farther away from each other
than $n^{-1/2}$.
In fact, the scaling factor $\gamma=\gamma_n$ is chosen in such a way that there are typically 
finitely many (independent of $n$) eigenvalues in an elongated box of size 
\((4\gamma n)^{-1/2}\times \ii (\gamma n)^{-1/4}\) around \(1+\sqrt{\gamma/4n}\) (see~\cref{fig circ law}).
The height of this box, which is essentially the square root of its width, is determined by the curvature of
the boundary of the circular law: above or below this box there are no eigenvalues since their modulus
would be too large. Given this heuristic picture, the 
typical distance between the eigenvalues in the relevant box 
is of order $n^{-1/4}$  modulo logarithmic factors, so they are well beyond the
correlation scale hence independent.   As a second result, we also establish this independence rigorously;
in fact we show that within this box the eigenvalues form 
a Poisson point process in the $n\to\infty$ limit.
Again, as a pure limit statement  this result has already been proven in~\cite{MR2594353}
for the complex Ginibre ensemble 
and in~\cite{MR3192169} for the real case; our contribution is to give an alternative direct proof
with an effective error bound. 
\begin{theorem}[Poisson Point Process]\label{thm poisson} 
    Let \(\sigma_1,\ldots,\sigma_n\) denote the eigenvalues of a real or complex \(n\times n\) Ginibre matrix. 
    Fix any \(t\in\R\) and any 
    function \(f\colon\C\to[0,\infty)\) supported on \([t,\infty)\times\ii\R\), 
     which, additionally, is assumed to be symmetric \(f(z)=f(\ov z)\) in the real case\footnote{This restriction is only for convenience,
     since by spectral symmetry $\overline{\sigma(X)}=\sigma(X)$, any non-symmetric function \(f\)
      can be replaced by its symmetrization \([f(z)+f(\ov z)]/2\)}.  Then we have 
    \begin{equation}\label{pois}
        \begin{split}
            &
            \E e^{-\sum_{i=1}^n  f(x_i+\ii y_i)}
            = \exp\Bigl(-\int_\F (1-e^{-f(x+\ii y)}) \frac{e^{-x-y^2}}{\sqrt{\pi}}\dif y\dif x\Bigr)
            + \landauO*{  \frac{ (\log\log n)^2}{\log n} },
        \end{split}
    \end{equation}
    where we introduced the eigenvalue rescaling
    \begin{equation}
        \sigma_i = 1+\sqrt{\frac{\gamma}{4n}} + \frac{x_i}{\sqrt{4\gamma n}} + \frac{\ii y_i}{(\gamma n)^{1/4}}
    \end{equation}
    and we set \(\mathbf F=\HC:=\set{z\in\C\given \Im z\ge 0}\) in the real and \(\mathbf F=\C\) in the complex case.
\end{theorem}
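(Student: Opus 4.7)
The plan is to express the Laplace functional on the left-hand side of~\eqref{pois} as a Fredholm determinant (in the complex case) or a Fredholm Pfaffian (in the real case), and then to reduce it to the Laplace functional of the Poisson point process with intensity $\rho(z)\,dx\,dy := \frac{e^{-x-y^2}}{\sqrt{\pi}}\,dx\,dy$, using the sharp asymptotics of the correlation kernel $K_n$ in the rescaling window around $z_0 := 1+\sqrt{\gamma/4n}$. These kernel asymptotics are the main technical estimate of the paper (and presumably the input behind \cref{thm gumbel} as well).

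In the complex case ($\beta=2$) I would start from the determinantal identity
\begin{equation*}
    \E e^{-\sum_i f(\sigma_i)} = \det\bigl(1-(1-e^{-f})K_n\bigr)
\end{equation*}
and expand $\log\det(1-A) = -\sum_{k\ge 1}\Tr(A^k)/k$ with $A = (1-e^{-f})K_n$. After the affine change of variables to $(x_i,y_i)$ with constant Jacobian $J = (2\gamma^{3/4}n^{3/4})^{-1}$, the rescaled kernel $\widehat K_n(z,w) := J\, K_n(\sigma(z),\sigma(w))$ should satisfy $\widehat K_n(z,z) = \rho(z)\bigl(1+O((\log\log n)^2/\log n)\bigr)$ uniformly in the window, so the $k=1$ term
\begin{equation*}
    \Tr\bigl((1-e^{-f})\widehat K_n\bigr) = \int (1-e^{-f(z)})\,\widehat K_n(z,z)\,dx\,dy
\end{equation*}
immediately reproduces the claimed exponent with the prescribed error. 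For $k\ge 2$ I would use the Schatten bound $|\Tr(A^k)|\le\|A\|_2^k$ and show that the Hilbert--Schmidt norm
\begin{equation*}
    \|(1-e^{-f})\widehat K_n\|_2^2 = \iint |1-e^{-f(z)}|\,|1-e^{-f(w)}|\,|\widehat K_n(z,w)|^2\,dz\,dw
\end{equation*}
tends to zero. This relies on an off-diagonal bound of the form $|\widehat K_n(z,w)|^2 \lesssim \rho(z)\rho(w)/(1+c(y-y')^2\sqrt{n/\gamma}/\gamma)$, which arises because the imaginary part of the erfc-like argument in the edge kernel scales as $(y-y')(n/\gamma)^{1/4}$; integrating this Lorentzian-type decay in $y'$ gives a saving of $O(n^{-1/4})$, more than enough.

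The real case ($\beta=1$) follows the same template using the Fredholm Pfaffian of the matrix-valued Borodin--Sinclair kernel. Because the scaling window sits at macroscopic distance $\sim\sqrt{\gamma/n}$ from the real axis (well above the $n^{-1/2}$ correlation scale) the matrix entries that couple a complex eigenvalue with its conjugate are super-polynomially suppressed in this window; consequently the Pfaffian effectively factorises into two copies indexed by the upper and lower half planes, and the assumed symmetry $f(z)=f(\bar z)$ identifies them into a single integral over $\HC$, producing the \(\mathbf F=\HC\) in~\eqref{pois} and the halved intensity.

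The hard part will be the kernel asymptotic itself, with the effective error $(\log\log n)^2/\log n$. Concretely, one must analyse the truncated exponential $\sum_{k<n}(nz\bar w)^k/k!$, or equivalently the incomplete gamma $\Gamma(n,nz\bar w)$, uniformly in the complex argument near its critical value $n$, keeping enough subleading terms to isolate (i)~the $e^{-y^2}$ factor of the diagonal, which traces back to the $y^2$-contribution in the expansion of $|z|^2-1$ at the reference point $z_0$, and (ii)~the decay in $|y-y'|$ of the off-diagonal kernel, coming from the imaginary part of the erfc argument. Once these analytic inputs are in place the Fredholm algebra above is routine.
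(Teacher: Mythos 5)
Your plan is essentially the paper's: reduce the Laplace functional to the trace of the (symmetrized) kernel on the rescaled window plus a Hilbert--Schmidt remainder, and get both from asymptotics of the incomplete gamma function. The paper packages the trace-log/Schatten argument you sketch into a single application of the regularized Fredholm determinant bound $\abs{\det(1-K)-e^{-\Tr K}}\le\norm{K}_2 e^{(\norm{K}_2+1)^2/2-\Tr K}$, which is the same estimate in compact form; and it, too, proves exactly the on- and off-diagonal kernel asymptotics you anticipate (Lemma~\ref{Kernel lemma}), including the Lorentzian-type decay in $(y_1-y_2)$.

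The one place where your heuristic is off is the real case. The conjugate-coupling entries $S_n(z,\bar w)$ are \emph{not} uniformly suppressed in the window: combining the $\erfc$ factors with $e^{-n(z-w)^2/2}$ leaves a factor roughly $\exp\bigl(-\tfrac12(n/\gamma)^{1/2}(y_1+y_2)^2\bigr)$, which is only small when both rescaled imaginary parts are bounded away from zero, and is $\approx 1$ near the real axis where the limiting intensity $e^{-y^2}/\sqrt\pi$ does not vanish. So the Pfaffian does not literally factorize into independent upper/lower-half-plane copies. The paper sidesteps this: for the trace only the $2\times 2$ diagonal $2S_n(z,z)$ matters, and there one shows $\Phi_n(z,z)\to 1$ away from a thin strip $\Im z\le n^{-5/12}$ whose contribution is negligible; for the Hilbert--Schmidt norm one notes that the integrals of $\abs{S_n(z,w)}^2$ and $\abs{S_n(z,\bar w)}^2$ over the (conjugation-symmetric) region are equal, so the coupling terms cost only a constant. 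The halved intensity $\beta/2$ then comes from the square root in $\E\prod_i(1-g(\sigma_i))=[\det(1-\sqrt g K_n^{\C,\C}\sqrt g)]^{1/2}$, combined with the factor $2$ from $\Tr K_n^{\C,\C}(z,z)=2S_n(z,z)$ and the restriction to $\HC$, not from any block-diagonal structure of the kernel. Finally, your Theorem also requires controlling the real eigenvalues; the paper disposes of them by citing~\eqref{Gumbel real real eq}, showing their maximum lives at scale $1+\landauO{n^{-1/2}}$, well below $1+\sqrt{\gamma/4n}$.
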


Both our main results follow from a precise asymptotics
of the rescaled Ginibre kernel $K_n(z, w)$ in the relevant box combined with 
the idea of the regularized Fredholm determinant also used in~\cite{MR3211006}.
The compact form of $K_n$ in the Ginibre case makes the calculations considerably shorter 
than the saddle point analysis for its contour integral representation used for the 
elliptic ensemble in~\cite{MR2594353, MR3192169}. In particular, we obtain an effective bound
on the speed of convergence unlike~\cite{MR2594353, MR3192169} that rely on dominated convergence. 
As a byproduct, we also obtain  concentration result with an effective error term for the linear statistics (in particular the number) of 
eigenvalues on a slightly larger box. This result  is crucially used in our companion paper~\cite{2206.04448}
in which we
accurately identify the size of $\max\Re \Spec(X)$ for matrices with general  i.i.d.\ entries,
going  well beyond the explicitly solvable models.

We close this introduction with a remark about eigenvectors. For many Hermitian random matrices or 
operators originating from disordered quantum systems,  the general  prediction is that 
Poisson eigenvalue statistics entails localized eigenvectors (while strongly correlated eigenvalue 
statistics, e.g. Wigner-Dyson, imply delocalized eigenvectors). This is not the case here: all
eigenvectors, even those  corresponding to extreme eigenvalues in the Poisson regime are fully
delocalized~\cite[Corollary 2.4]{MR4408013}.

\subsection*{Acknowledgement} We are grateful to G.\ Akemann for bringing the references \cite{MR2761338, MR3192169, MR2594353,MR3380679}
to our attention. 
Discussions with  Guillaume Dubach on a preliminary version of this project  are gratefully
acknowledged. 

\section{Complex Ginibre}
We recall a few basic facts about the correlation functions.
The joint probability density of the eigenvalues of a complex Ginibre matrix is given by~\cite{MR1986426}
\begin{equation}
    \rho_n(\bm z)=\rho_n(z_1,\ldots,z_n): = \frac{n^n}{\pi^n 1!\cdots n!} \exp\Bigl(-n\sum_{i}\abs{z_i}^2\Bigr) \prod_{i<j}\bigl(n\abs{z_i-z_j}^2\bigr).
\end{equation}
The product can be written as a product of Vandermonde determinants and we obtain 
\begin{equation}\label{det}
    \begin{split}
        \prod_{i<j}\bigl(n\abs{z_i-z_j}^2\bigr) &= \det\begin{pmatrix}
            1 & \sqrt{n}z_1 & \cdots & (\sqrt{n}z_1)^{n-1} \\ 
            \vdots & \vdots & \ddots & \vdots \\
            1 & \sqrt{n}z_n & \cdots & (\sqrt{n}z_n)^{n-1}
        \end{pmatrix}\begin{pmatrix}
            1 & \cdots & 1 \\ 
            \sqrt{n}\ov{z_1} & \cdots & \sqrt{n}\ov{z_n} \\
            \vdots & \ddots & \vdots\\ 
            (\sqrt{n}\ov{z_1})^{n-1}& \cdots & (\sqrt{n}\ov{z_n})^{n-1} 
        \end{pmatrix}\\
        &= 1! \cdots (n-1)!\det\Bigl( K_n(z_i, z_j)\Bigr)_{i,j=1}^n,\quad  K_n(z,w):=\sum_{l=0}^{n-1}\frac{(n z\ov{w})^l}{l!},
    \end{split}
\end{equation}
so that we conclude 
\begin{equation}
    \rho_n(\bm z) = \frac{n^n}{\pi^n n!} e^{-n\abs{\bm z}^2} \det\Bigl( K_n(z_i, z_j)\Bigr)_{i,j=1}^n,
\end{equation}
i.e.\ the eigenvalues form a \emph{determinantal process}.
Note that $K_n$ is the kernel of a positive operator of rank \(n\), in particular its off-diagonal
terms are estimated by the diagonal ones via Cauchy-Schwarz inequality:
\begin{equation}
    \abs{ K_n(z,w)}^2 \le K_n(z,z)K_n(w,w),
\end{equation}
which also follows directly from the formula for $K_n(z,w)$. In order to integrate out variables we rely on the following well-known identities: 
    \begin{equation}\label{Kn}
        \frac{n}{\pi}\int_{\C}  e^{-n\abs{z}^2} K_n(z,z)\dif^2 z =n,
    \end{equation}
    and for any fixed \(w_1,w_2\in\C\) %
    \begin{equation}\label{KnKn}
        \frac{n}{\pi}\int_{\C} e^{-n\abs{z}^2} K_n(w_1,z)K_n(z,w_2) \dif^2 z = K_n(w_1,w_2).
    \end{equation}
We recall that    both claims follow directly from the identity 
    \begin{equation}
        \begin{split}
            \frac{n}{\pi}\int_{\C} e^{-n\abs{z}^2} (\sqrt{n}z)^a (\sqrt{n}\ov{z})^b \dif^2 z = \delta_{ab} a!
        \end{split}
    \end{equation}
    for any  $a, b\in \N$
 and the definition of \(K_n\). 
As a consequence of these identities, %
 an arbitrary number of variables can be integrated out and we obtain the following
standard formula for the correlation functions:  
\begin{lemma}[\(k\)-point correlation function]  
    For
    \begin{equation}
      \rho_n^k(z_1,\ldots,z_k):=  \int_{\C^{n-k}}\rho_n(\bm z)\dif^2 z_{k+1} \cdots \dif^2 z_n
    \end{equation}
  it holds that
    \begin{equation}
        \rho_n^k(\bm z)=\frac{n^k(n-k)!}{\pi^n n!} e^{-n\abs{\bm z}^2} \det\Bigl( K_n(z_i, z_j)\Bigr)_{i,j=1}^k.
    \end{equation}
\end{lemma}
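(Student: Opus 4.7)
The plan is to iteratively integrate out one variable at a time from the explicit joint density $\rho_n(\bm z)$, exploiting the self-reproducing structure of the kernel $K_n$ captured by~\eqref{Kn}--\eqref{KnKn}. The key technical step is the following reduction identity, which I would establish first: for every $1\le N\le n$ and every fixed $z_1,\ldots,z_{N-1}\in\C$,
\[
    \frac{n}{\pi}\int_{\C} e^{-n\abs{z_N}^2}\det\bigl(K_n(z_i,z_j)\bigr)_{i,j=1}^N \dif^2 z_N = (n-N+1)\,\det\bigl(K_n(z_i,z_j)\bigr)_{i,j=1}^{N-1}.
\]

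To prove this lemma I would expand the determinant by the Leibniz formula and split the sum $\sum_{\sigma\in S_N}$ according to whether $\sigma$ fixes $N$. If $\sigma(N)=N$, then $\sigma$ restricts to an element of $S_{N-1}$ of the same sign, and the isolated factor $K_n(z_N,z_N)$ integrates to $n$ by~\eqref{Kn}; this contributes $n\det(K_n(z_i,z_j))_{i,j=1}^{N-1}$. If instead $\sigma(N)=j\ne N$, setting $i:=\sigma^{-1}(N)$, the two factors $K_n(z_i,z_N)K_n(z_N,z_j)$ collapse to $K_n(z_i,z_j)$ by~\eqref{KnKn}; the reduced permutation $\sigma'\in S_{N-1}$ defined by $\sigma'(i):=j$ and $\sigma'(\ell):=\sigma(\ell)$ for $\ell\ne i$ satisfies $\sgn(\sigma)=-\sgn(\sigma')$, and each such $\sigma'\in S_{N-1}$ arises from exactly $N-1$ original permutations (one per choice of $i\in\{1,\ldots,N-1\}$), contributing $-(N-1)\det(K_n(z_i,z_j))_{i,j=1}^{N-1}$ in total. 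Summing the two cases produces the net factor $n-N+1$.

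With the reduction lemma in hand, the lemma follows by iterating it $n-k$ times, integrating out $z_n,z_{n-1},\ldots,z_{k+1}$ in succession. The product of the resulting scalar factors telescopes as $\prod_{N=k+1}^n (n-N+1) = (n-k)!$, and collecting the powers of $n$ and $\pi$ absorbed into each plain Lebesgue measure $\dif^2 z_j$ and combining with the overall prefactor of $\rho_n(\bm z)$ yields the claimed formula. The only genuinely nontrivial step is the sign bookkeeping in the non-fixing case of the reduction lemma; the remainder is a routine telescoping that uses only~\eqref{Kn}--\eqref{KnKn} and no further analytic input.
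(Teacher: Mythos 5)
Your argument is the standard Gaudin--Mehta integration lemma for determinantal kernels, carried out correctly; the paper offers no explicit proof of this ``standard formula,'' merely remarking that it follows from \eqref{Kn}--\eqref{KnKn}, and your Leibniz-expansion reduction step (splitting $\sigma\in S_N$ according to whether $\sigma(N)=N$, collapsing $K_n(z_i,z_N)K_n(z_N,z_j)$ to $K_n(z_i,z_j)$, and tracking the sign flip together with the $(N-1)$-to-$1$ multiplicity) is exactly the bookkeeping that remark leaves implicit. One small note: your iteration produces the prefactor $n^k(n-k)!/(\pi^k n!)$ rather than the $\pi^n$ printed in the lemma statement, but the exponent $n$ there is a typo --- the paper's own application of the lemma in \eqref{prod g Fredholm} uses the factor $n^k/\pi^k$ --- so your computation yields the formula actually intended.
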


Consider a function \(g\colon\C\to[0,1]\) and evaluate  
\begin{equation}\label{prod g Fredholm}
    \begin{split}
        \E \prod_{i=1}^n (1-g(\sigma_i)) &= \int_{\C^n} \rho_n(\bm z) \prod_{i=1}^n (1-g(z_i))\dif^2\bm z \\
        &= \sum_{k=0}^n  (-1)^{k}  \binom{n}{k} \int_{\C^k}  \rho_n^k(\bm z)\prod_{i=1}^{k} g(z_i)\dif^2 \bm z \\
        &= \sum_{k=0}^n \frac{(-1)^k}{k!} \frac{n^k}{\pi^k }\int_{\C^n} e^{-n\abs{\bm z}^2}\det\Bigl( K_n(z_i, z_j)\Bigr)_{i,j=1}^n \prod_{i=1}^k g(z_i) \dif^2 \bm z\\
        & = \sum_{k=0}^n \frac{(-1)^k}{k!} \int_{\C^k} \det\Bigl( \sqrt{g(z_i)} \wt K_n(z_i, z_j)\sqrt{g(z_j)}\Bigr)_{i,j=1}^k \dif^2 \bm z\\
        &= \det(1-\sqrt{g}\wt K_n\sqrt g)
    \end{split}
\end{equation}
which we recognize as the Fredholm determinant of \(1-\sqrt{g}\wt K_n \sqrt g\) (see~\cref{def fred} below, and recall that \(\wt K_n\) has rank \(n\)), where 
\begin{equation}
    \wt K_n(z,w):= \frac{n}{\pi} e^{-n(\abs{z}^2+\abs{w}^2)/2} K_n(z,w) = \frac{n}{\pi} e^{-n(\abs{z}^2+\abs{w}^2-2 z \ov w)/2} \frac{\Gamma(n,n z\ov{w})}{\Gamma(n)}.
\end{equation} 
Here \(\Gamma(\cdot,\cdot)\) denotes the incomplete Gamma function defined as 
\begin{equation}
    \Gamma(s,z):=\int_z^\infty t^{s-1} e^{-t}\dif t,
\end{equation}
where $s\in \N$ and the integration contour goes from $z\in \C$ to real infinity. 
\begin{definition}[Fredholm determinant]\label{def fred}
    Let \((\Omega,\mu)\) denote a measure space and let \(K(z,w)\) be a kernel on \(\Omega\). Then  the \emph{Fredholm determinant} of \(1-K\) is defined as 
    \begin{equation}
        \det(1-K):=\sum_{k=0}^\infty \frac{(-1)^k}{k!} \int_{\Omega^k} \det\Bigl(K(z_i,z_j)\Bigr)_{i,j=1}^k \dif\mu(z_1)\cdots\dif\mu(z_k).
    \end{equation}
\end{definition}

\subsection{Scaling limit for \(\max \Re \sigma_i\)}
We now consider the scaling limit for the part of the complex plane in which the eigenvalue with the largest real part is located c.f.~\cref{fig circ law}. We will show that the eigenvalue with the largest real part lives on a scale
\((4\gamma n)^{-1/2}\times \ii (\gamma n)^{-1/4}\) around \(1+\sqrt{\gamma/4n}\). 

The fact that outside the unit circle the kernel \(\wt K_n\) has small Hilbert-Schmidt norm prompts the introduction of the regularised determinant~\cite[IV.(7.8)]{MR1744872}  
\begin{equation}
    \dett(1-K):=\det\Bigl((1-K)e^{K}\Bigr)
\end{equation}
which for finite-rank \(K\) allows to write \(\det(1-K) = \dett(1-K) \exp(-\Tr K)\). From~\cite[IV.(7.11)]{MR1744872} we thus conclude 
\begin{equation}\label{det tr}
    \abs{\det(1-K) - \exp(-\Tr K)}\le  \norm{K}_2 e^{(\norm{K}_2+1)^2/2-\Tr K}
\end{equation}
where 
\begin{equation}
    \Tr K = \int_\Omega K(x,x)\dif\mu(x), \quad \norm{K}_2^2 = \int_{\Omega^2} \abs{K(x,y)}^2 \dif\mu(x)\dif \mu(y).
\end{equation}
The regularized determinant as a technical tool was used in~\cite{MR3211006} 
in a very similar context for the spectral radius of real Ginibre matrices.
\begin{proposition}\label{gumbel lemma}
    Let \(\abs{t}\le \sqrt{\log n}/10\) and define the set 
    \begin{equation}\label{A def}
        A=A(t):= \set*{z\in\C\given \Re z\ge 1+\sqrt{\frac{\gamma}{4n}}+\frac{t}{\sqrt{4\gamma n}}}. %
    \end{equation}
    Then for \(g\colon\C\to[0,1]\) supported on \(\supp g\subset A(t)\), and for \(n\) large enough so that \(\gamma>0\), it holds that     
    \begin{equation}\label{eq tr K statement}
        \begin{split}
            \Tr \sqrt{g}\wt K_n \sqrt{g} &= \int_{t}^\infty \int_\R g(z) \frac{e^{-x-y^2}}{\sqrt{\pi}} \dif y\dif x + \landauO*{e^{-t}\frac{(\log\log n)^2+\abs{t}^2}{\log n}}, \\
            z&=1+\sqrt{\frac{\gamma}{4n}}+\frac{x}{\sqrt{4\gamma n}} + \frac{\ii y}{(\gamma n)^{1/4}}
        \end{split}
    \end{equation}
    and 
    \begin{equation}\label{eq hs K statement}
        \norm{\sqrt{g}\wt K_n \sqrt{g}}_2 \lesssim e^{-\sqrt{\log n}/32}.
    \end{equation}
    The unspecified constants in $\lesssim$ and $\landauO*{\cdot}$ are uniform in $n$ and  in $\abs{t}\le \sqrt{\log n}/10$.
\end{proposition}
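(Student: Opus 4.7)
Both estimates reduce to an explicit asymptotic expansion of the rescaled Ginibre kernel, whose single analytic ingredient is the classical saddle-point asymptotic
\[
\frac{\Gamma(n,n+\lambda\sqrt n)}{\Gamma(n)}=\frac{e^{-\lambda^2/2}}{\lambda\sqrt{2\pi}}\Bigl(1+\landauO*{\lambda^{-2}+\lambda/\sqrt n}\Bigr),
\]
valid uniformly for $1\ll\lambda\ll\sqrt n$. The particular value $\gamma=\tfrac12\log\bigl(n/[(\log n)^5\cdot 2\pi^4]\bigr)$ is engineered precisely so that with the $\lambda$-parameter produced by our change of coordinates, the powers of $n$ and $\log n$ in the prefactor cancel against the Jacobian of the rescaling.

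\textbf{Trace formula.} I start from $\wt K_n(z,z)=(n/\pi)\Gamma(n,n|z|^2)/\Gamma(n)$, which is immediate from $\sum_{l<n}s^l/l!=e^s\Gamma(n,s)/\Gamma(n)$. Writing $n|z|^2=n+\lambda\sqrt n$ and Taylor-expanding in the scaled variables yields $\lambda=\sqrt\gamma+(x+y^2)/\sqrt\gamma+\text{l.o.t.}$ and hence $\lambda^2/2=\gamma/2+(x+y^2)+(x+y^2)^2/(2\gamma)+\text{l.o.t.}$ Substituting the asymptotic above and combining with $\dif^2z=\dif x\dif y/[2(\gamma n)^{3/4}]$ produces, once all prefactors collapse, the limit density $e^{-x-y^2}/\sqrt\pi$. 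Two error sources remain: the multiplicative $\landauO*{1/\lambda^2}=\landauO*{1/\log n}$ from the gamma asymptotic and the $\landauO*{(x+y^2)^2/\gamma}$ from the Taylor expansion of $\lambda^2/2$. Integrating both against $e^{-x-y^2}$ on $[t,\infty)\times\R$ yields the overall factor $e^{-t}$; the $(\log\log n)^2$ in the stated error tracks the $5\log\log n$ correction hidden in $\gamma$, while $\abs{t}^2$ records shifted Gaussian moments. A preliminary truncation to $|y|\lesssim\gamma^{1/4}$, outside which the diagonal kernel is super-exponentially small, keeps $\lambda$ in the range where the expansion is sharp.

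\textbf{Hilbert--Schmidt estimate.} For the HS norm I use the off-diagonal identity $|\wt K_n(z,w)|^2=(n/\pi)^2 e^{-n|z-w|^2}|\Gamma(n,nz\ov w)|^2/\Gamma(n)^2$ together with the complex-argument extension of the above asymptotic, applied at $nz\ov w=n+\mu\sqrt n$ with $\mu\in\C$. On $A(t)\times A(t)$ one finds $\Re\mu\gtrsim\sqrt\gamma$ while $\Im\mu$ is controlled by $\Im z-\Im w$; a short computation then reveals the crucial cancellation that the $y$-dependence in $e^{-\Re\mu^2}=|e^{-\mu^2/2}|^2$ exactly matches the $y$-dependence in $e^{-n|z-w|^2}$, leaving a bound of the form $\sim e^{-\gamma-(x_1+x_2)-(x_1^2+x_2^2)/(2\gamma)}$ times a $1/|\mu|^2$ factor providing the residual $y$-integrability. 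Collecting powers of $n$, $\gamma$, and the double Jacobian, and using the explicit form of $\gamma$, gives the claimed $\|\sqrt g\wt K_n\sqrt g\|_2\lesssim e^{-\sqrt{\log n}/32}$.

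\textbf{Main obstacle.} The HS bound is the more delicate of the two: the complex argument $nz\ov w$ may have imaginary part as large as $n^{3/4}/\gamma^{1/4}$, vastly exceeding the real shift $\sqrt{n\gamma}$, so one sits at the boundary of validity of the naive saddle-point asymptotic for $\Gamma(n,\cdot)$. A careful contour deformation (or equivalently a direct Laplace-type analysis of the integral representation) is required to justify the complex expansion and extract the $y$-cancellation above. The remaining task---ensuring uniformity of all error constants over $\abs{t}\le\sqrt{\log n}/10$---then reduces to routine bookkeeping of Gaussian and exponential moments.
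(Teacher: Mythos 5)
Your overall strategy---reducing both estimates to asymptotics of the incomplete Gamma function $\Gamma(n,\cdot)$, noting that $\gamma$ is engineered so that the Jacobian and the prefactor collapse to $e^{-x-y^2}/\sqrt\pi$, and then truncating the domain to control the error---is the same as the paper's, which also relies on the erfc/saddle-point asymptotics of $\Gamma(n,nt)/\Gamma(n)$ (via~\cite[Lemmas 3.2--3.4]{MR3211006}) and the same regularized Fredholm machinery. The observation $|\wt K_n(z,w)|^2 = (n/\pi)^2 e^{-n|z-w|^2}|\Gamma(n,nz\ov w)|^2/\Gamma(n)^2$ is a nice reformulation of the paper's representation. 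However, a few points in the sketch are either incorrect or leave real gaps that you would need to fix.

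First, the stated residual after the $y$-cancellation in the Hilbert--Schmidt step is wrong: after the leading $\sqrt{n/\gamma}\,(y_1-y_2)^2$ cancellation between $e^{-n|z-w|^2}$ and $e^{-\Re\mu^2}$ one is \emph{not} left with a $y$-independent exponent $e^{-\gamma-(x_1+x_2)-(x_1^2+x_2^2)/(2\gamma)}$; the correct leading exponent (cf.~\cref{off diag close}) is $e^{-\gamma-x_1-x_2-y_1^2-y_2^2}$. The Gaussian factor $e^{-y_1^2-y_2^2}$ arises from the cross term $2y_1y_2$ in $(\Re\mu)^2$ combined with the sub-leading pieces of $(\Im\mu)^2$; it is essential for the diagonal integral \eqref{eq tr K statement} and makes the bookkeeping cleaner in \eqref{eq hs K statement}. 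Second, your attribution of the $(\log\log n)^2$ in the error to the $5\log\log n$ shift inside $\gamma$ is misplaced: that shift only produces a single power, $\landauO(\log\log n/\log n)$, through $e^{-\gamma/2}$ (cf.~\cref{e gamma asymp}). The $(\log\log n)^2$ actually comes from the truncation of the domain at scale $t_0\sim\log\log n+|t|$ (the kernel asymptotic carries a relative error $\landauO((|\bm x|^2+|\bm y|^4)/\log n)$, and one optimizes $t_0$ against the tail $e^{-t_0/4}$); see the paper's proof of the trace estimate. Third, the sketch has no argument at all for the regime $|\bm x|+|\bm y|^2 > \sqrt{\log n}/2$ in the HS bound. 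The paper handles it by using the reproducing identity \eqref{KnKn} to collapse the $(x_2,y_2)$-integral of $|\wt K_n(z,w)|^2$ back to the diagonal $\wt K_n(z,z)$ and then applying the crude bound \eqref{offdiag large}; without some such device you cannot simply invoke the close-range expansion there. None of these require a different method---your route would work---but as written the proposal would not compile into a correct proof without these corrections.
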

In particular,~\cref{prod g Fredholm,det tr} combined with~\cref{gumbel lemma} for any  fixed \(t\) gives
\begin{equation}
    \begin{split}
        \Prob\Bigl(\max_i\Re\sigma_i< 1 + \sqrt{\frac{\gamma}{4n}}+\frac{t}{\sqrt{4\gamma n}} \Bigr)
        &=\Prob\Bigl(\sigma_1,\ldots,\sigma_n\in A(t)^c\Bigr) \\
        &= \det(1-\chi_{A(t)} \wt K_n \chi_{A(t)}) \xrightarrow{n\to\infty} e^{-e^{-t}}
    \end{split}
\end{equation}
with \(\chi_A\) denoting the characteristic function of the set \(A\), completing the proof of~\cref{thm gumbel} in the complex case. Moreover, for any function \(f\colon\C\to[0,\infty)\) supported in \(A(t)\) we also have that
\begin{equation}
    \begin{split}
        \E \exp\Bigl(-\sum_{i=1}^n  f(\sigma_i)\Bigr) &= \det\Bigl(1-\sqrt{1-e^{-f}}\wt K_n \sqrt{1-e^{-f}}\Bigr)\\ 
        &\xrightarrow{n\to\infty} \exp\Bigl(-\int_{t}^\infty\int_\R (1-e^{-f(z)}) \frac{e^{-x-y^2}}{\sqrt{\pi}}\dif y\dif x\Bigr)
    \end{split}
\end{equation}
with \(z\) as in~\cref{eq tr K statement}, proving the complex case of~\cref{thm poisson} after change of variables.
The error terms in~\eqref{gum} and~\eqref{pois} can easily be obtained 
from~\eqref{eq tr K statement}--\eqref{eq hs K statement}.

Hence the remaining task is to prove~\cref{gumbel lemma} which
will be an easy consequence of the following~\cref{Kernel lemma}. 
\begin{lemma}\label{Kernel lemma}
    Rescale the kernel variables as
    \begin{equation}\label{varchange}
        z = 1 + \sqrt{\frac{\gamma}{4n}} + \frac{x_1}{\sqrt{4\gamma n}} + \frac{\ii y_1}{(\gamma n)^{1/4}}, \quad w= 1 + \sqrt{\frac{\gamma}{4n}} + \frac{x_2}{\sqrt{4\gamma n}} + \frac{\ii y_2}{(\gamma n)^{1/4}}
    \end{equation}
    with ${\bm x} := (x_1, x_2)$, ${\bm y}:=(y_1, y_2)$ real vectors.
    In the regime   \(\abs{\bm x}+\abs{\bm y}^2 \le \sqrt{\log n}/2\) and  for  \(\abs{y_1-y_2}< n^{1/10} n^{-1/4}\) we have the asymptotics 
    \begin{equation}\label{off diag close}
        \frac{\abs{\wt K_n(z,w)}^2}{4(\gamma n)^{3/2}} = \frac{\gamma e^{-x_1-x_2-y_1^2-y_2^2}}{\pi(\gamma+\sqrt{n/\gamma}(y_1-y_2)^2)} \biggl(1+\landauO*{\frac{\log \log n+\abs{\bm x}^2+\abs{\bm y}^4}{\log n}}\biggr).
    \end{equation}
    On the other hand, for \(\abs{\bm x}+\abs{\bm y}^2 \le \sqrt{\log n}/2\)  and
    \(\abs{y_1-y_2}\ge C n^{-1/4}\) for some \(C\ge 1\) we have the estimate 
    \begin{equation}\label{off diag far}
        \frac{\abs{\wt K_n(z,w)}^2}{(\gamma n)^{3/2}} \lesssim
        \frac{\gamma e^{-x_1-y_1^2-x_2-y_2^2}}{\gamma+\sqrt{n/\gamma}(y_1-y_2)^2}  \biggl(1+\landauO*{\frac{\sqrt{\gamma}}{C^2}+\frac{ \abs{\bm x}^2+\abs{\bm y}^4}{\log n}}\biggr). 
    \end{equation}
    Finally, for \(x_1+y_1^2\ge0\), \(x_2+y_2^2\ge0\) we have the uniform bound 
    \begin{equation}\label{offdiag large}
        \frac{\abs{\wt K_n(z,w)}^2}{(\gamma n)^{3/2}} \lesssim \abs{z}^2\abs{w}^2 e^{-(x_1+y_1^2)/3}e^{-(x_2+y_2^2)/3}.
    \end{equation}
\end{lemma}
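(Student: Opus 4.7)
The plan is to start from the representation
\[
\wt K_n(z,w) = \frac{n}{\pi}\,e^{-n(|z|^2+|w|^2)/2 + nz\bar w}\,\frac{\Gamma(n, nz\bar w)}{\Gamma(n)},
\]
which immediately gives
\[
|\wt K_n(z,w)|^2 = \Bigl(\frac n\pi\Bigr)^2 e^{-n|z-w|^2}\Bigl|\frac{\Gamma(n, nz\bar w)}{\Gamma(n)}\Bigr|^2.
\]
Setting $\eta := z\bar w$, the rescaling~\eqref{varchange} gives $\Re\eta - 1 \sim \sqrt{\gamma/n}$, so $n(\Re\eta - 1) \sim \sqrt{n\gamma}$, which is $\gg\sqrt n$. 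This places $n\eta$ in the ``deep tail'' regime of the incomplete Gamma function, where one step of integration by parts combined with Stirling's formula yields the uniform asymptotic
\[
\frac{\Gamma(n, n\eta)}{\Gamma(n)} = \frac{\eta^{n-1}\, e^{n(1-\eta)}}{\sqrt{2\pi n}\,(1 - 1/\eta)}\,\bigl(1 + \landauO{1/(n|\eta - 1|^2)}\bigr).
\]

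The proof then reduces to analysing three factors separately. For the geometric factor $|1-1/\eta|^{-2}$, writing $z = 1+u_1+\ii v_1$ and $w = 1+u_2+\ii v_2$ gives $\eta - 1 = (u_1+u_2) + \ii(v_1-v_2) + \landauO{u^2+v^2}$, so that after the rescaling
\[
|\eta-1|^2 \approx (u_1+u_2)^2 + (v_1-v_2)^2 = \frac{1}{n}\bigl(\gamma + \sqrt{n/\gamma}\,(y_1-y_2)^2\bigr),
\]
producing precisely the denominator in~\eqref{off diag close}.

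The main step is the amplitude computation. I would Taylor expand $n\log|\eta|^2 = n\log(1+s_1) + n\log(1+s_2)$ with $s_j := 2u_j + u_j^2 + v_j^2$, retaining terms up to $s_j^2$ in the logarithm (this is the lowest order contributing below $\landauO{1}$ error). A chain of cancellations then occurs: the leading $\sqrt{n\gamma}$ and $(x_1+x_2)\sqrt{n/\gamma}$ pieces coming from $ns_j$ cancel exactly against $-2n\Re(\eta-1)$; the large $(y_1^2+y_2^2)\sqrt{n/\gamma}$ piece from $nv_j^2$ combines with $-2nv_1v_2$ to $(y_1-y_2)^2\sqrt{n/\gamma}$, which in turn is exactly cancelled by the corresponding piece of $-n|z-w|^2$. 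The crucial $-y_j^2$ term originates from the cross contribution $-2nu_j v_j^2$ in $-\tfrac12 ns_j^2$: using $2u_j\sqrt{n/\gamma} = 1 + x_j/\gamma$ gives $-2nu_j v_j^2 = -y_j^2(1 + x_j/\gamma)$. Summing all contributions, the exponent collapses to $-\gamma - (x_1+x_2) - (y_1^2 + y_2^2)$ plus admissible corrections of order $(|\bm x|^2 + |\bm y|^4)/\log n$. Combining with Stirling for $\Gamma(n)$ and using the identity $\sqrt n\, e^{-\gamma}/(8\pi^2\gamma^{5/2}) = 1 + \landauO{\log\log n/\log n}$, a direct consequence of the precise definition of $\gamma$, delivers the leading constant $4(\gamma n)^{3/2}\gamma/\pi$ in~\eqref{off diag close}.

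The far estimate~\eqref{off diag far} is obtained from the same expansion, except that now $|\eta-1|^2$ may be dominated by the $(v_1-v_2)^2$ term, so that the Gamma-function error $\landauO{1/(n|\eta-1|^2)}$ improves to $\landauO{\sqrt\gamma/C^2}$ when $|y_1-y_2|\ge Cn^{-1/4}$. The uniform bound~\eqref{offdiag large} follows from a coarser estimate: for $\Re\eta > 1$ one has $|\Gamma(n, n\eta)/\Gamma(n)| \lesssim |\eta|^{n-1} e^{-n\Re(\eta-1)}/\sqrt n$, which together with the concavity inequality $\log(1+t) - t \lesssim -t$ on $t\ge0$ (used to absorb $|z|^2|w|^2$ and the fractional constants) yields the claimed $e^{-(x_j+y_j^2)/3}$ decay. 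The main obstacle is the careful bookkeeping of these Taylor expansions and cancellations: one must verify that no subleading term contributes outside the stated error window of $\landauO{(\log\log n + |\bm x|^2 + |\bm y|^4)/\log n}$, and in particular correctly isolate the cross term $u_jv_j^2$, which is the sole source of the $e^{-y_j^2}$ Gaussian.
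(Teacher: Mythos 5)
Your overall strategy matches the paper's quite closely: you start from the same incomplete‐Gamma asymptotic $\Gamma(n,n\eta)/\Gamma(n)\sim \eta^n e^{n(1-\eta)}/\bigl(\sqrt{2\pi n}\,(\eta-1)\bigr)$, convert $|\wt K_n|^2$ to $(n/\pi)^2 e^{-n|z-w|^2}|\Gamma/\Gamma|^2$, Taylor‐expand the exponent, and verify the same chain of cancellations that collapses it to $-\gamma-x_1-x_2-y_1^2-y_2^2$; the identity $e^{-\gamma}\sqrt n=8\pi^2\gamma^{5/2}\bigl(1+\landauO{\log\log n/\log n}\bigr)$ (this is $e^{-\gamma/2}$ in the paper's form~\eqref{e gamma asymp} squared) supplies the prefactor exactly as you describe. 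The main cosmetic difference is that you use a single asymptotic form for all three regimes, whereas the paper switches between the $\mu$-and-$\erfc$ formulations \cite[Lemmas 3.2, 3.3]{MR3211006} for the close and diagonal estimates and the far form \cite[Lemma 3.4]{MR3211006} otherwise; since both reduce to $e^{-n(\eta-1)^2/2}/(\sqrt{2\pi n}(\eta-1))$ once $|\eta-1|\gtrsim\sqrt{\gamma/n}$, this is a legitimate streamlining.

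Two imprecisions in the treatment of the uniform bound \eqref{offdiag large} are worth flagging, although neither undermines the strategy. First, the claimed bound $|\Gamma(n,n\eta)/\Gamma(n)|\lesssim |\eta|^{n-1}e^{-n\Re(\eta-1)}/\sqrt n$ is false for $\eta$ close to $1$ (e.g.\ at $\eta=1$ the left side is $\approx 1/2$ while the right is $1/\sqrt n$); the correct bound carries an extra factor $1/|\eta-1|$, i.e.\ $\lesssim |\eta|^n e^{-n\Re(\eta-1)}/(\sqrt n\,|\eta-1|)$. This matters here since on the diagonal $\eta-1=|z|^2-1\sim\sqrt{\gamma/n}\ll 1$, so $1/|\eta-1|\sim\sqrt{n/\gamma}$ is a large correction; using the corrected bound (together with $\eta-1\ge \sqrt{\gamma/n}$, which follows from $x+y^2\ge 0$) does reproduce the claimed $|z|^2 e^{-(x+y^2)/3}$. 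Second, the ``concavity inequality $\log(1+t)-t\lesssim -t$'' is not correct on all of $t\ge 0$ — near $0$ the left side behaves like $-t^2/2$, not $-t$; the paper's version, $t-\log t-1\ge\delta(1-\delta)(t-1)/2$ for $t\ge1+\delta$, is the right tool (cf.~\eqref{mu ineq}). You should also not expect ``one step of integration by parts'' to give the needed asymptotic with error $\landauO{1/(n|\eta-1|^2)}$ uniformly down to $|\eta-1|\sim n^{-1/2+o(1)}$; this regime is borderline for the IBP series, which is why the paper outsources it to \cite[Lemma 3.4]{MR3211006}. With those repairs, your argument is sound and recovers the paper's proof.
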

\begin{proof}[Proof of~\cref{gumbel lemma}]
    Set \(t_0:=4(\log\log n+\abs{t})\) and estimate the trace in~\cref{eq tr K statement} as follows
    \begin{equation}\label{tr K cplx}
        \begin{split}
            \Tr \sqrt{g}\wt K_n\sqrt g&= \int_{A(t)} g(z) \wt K_n(z,z)\dif^2 z \\
            &= \biggl(\int_{t}^{t_0}\int_{y^2< 2t_0}+\int_{t}^{t_0}\int_{y^2\ge 2t_0}+\int_{t_0}^{\infty}\int_\R\biggr) g(z) \frac{\wt K_n(z,z)}{2(\gamma n)^{3/4}}\dif y\dif x \\
            &= \int_{t}^{t_0}\int_{y^2< t_0} g(z)\frac{e^{-x-y^2}}{\sqrt{\pi}}\dif y\dif x \Bigl(1+\landauO*{\frac{(\log\log n)^2+\abs{t}^2}{\log n}}\Bigr)+\landauO*{e^{-\abs{t_0}/4}}\\ 
            &= \int_{t}^\infty \int_\R g(z)\frac{e^{-x-y^2}}{\sqrt{\pi}}\dif y\dif x +\landauO*{e^{-t}\frac{(\log\log n)^2+\abs{t}^2}{\log n}},
        \end{split}
    \end{equation}
    where we used~\cref{off diag close} for the first integral and~\cref{offdiag large} for the remaining two integrals. 
    
    For the bound on~\cref{eq tr K statement} we estimate 
    \begin{equation}
        \Tr (\sqrt{g} \wt K_n \sqrt{g})^2 \le \iint_{A(t)} \abs{\wt K_n(z,w)}^2\dif^2 z \dif^2 w,   
    \end{equation}
    and after a change of variables from $(z, w)$ to $({\bm x}, {\bm y})$  using~\eqref{varchange} we split the integral 
    into two parts.    
    First estimate the part 
    where \(\abs{\bm x}+\abs{\bm y}^2>\sqrt{\log n}/2\) and obtain 
    \begin{equation}\label{HS large cplx}
        \begin{split} 
            &\iint_{t}^\infty\iint_{\R} \frac{\abs{\wt K_n(z,w)}^2}{4(\gamma n)^{3/2}} \bm 1\bigl(\abs{\bm x}+\abs{\bm y}^2>\frac{\sqrt{\log n}}{2}\bigr) \dif \bm y \dif \bm x\\ 
            &\quad \le \int_t^\infty\int_\R \iint_\R \frac{\abs{\wt K_n(z,w)}^2}{4(\gamma n)^{3/2}}\bm 1\bigl(\abs{x_1}+y_1^2>\frac{\sqrt{\log n}}{4}\bigr) \dif \bm y \dif x_2 \dif x_1 \\
            &\quad = \int_t^\infty \int_\R \frac{\wt K_n(z,z)}{2(\gamma n)^{3/4}}\bm 1\bigl(\abs{x_1}+y_1^2>\frac{\sqrt{\log n}}{4}\bigr) \dif y_1 \dif x_1  \\
            &\quad \lesssim \int_t^\infty \int_\R e^{-(x+y^2)/4}\bm 1\bigl(\abs{x}+y^2>\frac{\sqrt{\log n}}{4}\bigr)\dif y \dif x\lesssim e^{-\sqrt{\log n}/16}
        \end{split}
    \end{equation}
    due to~\cref{KnKn} in the second and~\cref{offdiag large} in the last step. In the remaining integral we use~\cref{off diag close} whenever \(\abs{y_1-y_2}\le n^{-1/6}\) and~\cref{off diag far} otherwise to find 
    \begin{equation}\label{small}
        \begin{split}
            &\iint_{t}^\infty\iint_{\R} \frac{\abs{\wt K_n(z,w)}^2}{(\gamma n)^{3/2}} \bm 1\bigl(\abs{\bm x}+\abs{\bm y}^2\le \frac{\sqrt{\log n}}{2}\bigr) \dif \bm x\dif \bm y\\ 
            &\lesssim\iint_{t}^\infty\iint_{\R} e^{-x_1-x_2-y_1^2-y_2^2}\Bigl(\bm 1(\abs{y_1-y_2}\le n^{-1/6})+\frac{\bm 1(\abs{y_1-y_2}> n^{-1/6})}{\gamma^{-3/2}n^{1/6}}  \Bigr) \dif \bm y\dif \bm x\\
            &\lesssim e^{-2t} n^{-1/6}\gamma^{3/2},
        \end{split}
    \end{equation}
    concluding the proof. 
\end{proof}
\begin{proof}[Proof of~\cref{Kernel lemma}]
    For~\cref{offdiag large} by Cauchy-Schwarz it is sufficient to prove 
    \begin{equation}\label{diag large}
        \frac{\wt K_n(z,z)}{(\gamma n)^{3/4}} \lesssim \abs{z}^2 e^{-(x+y^2)/3}.
    \end{equation}
    For the proof of~\cref{diag large} we recall the asymptotics~\cite[Lemma 3.2]{MR3211006} of the incomplete \(\Gamma\) function
    \begin{equation}\label{gamma mu}
        \frac{\Gamma(n,nt)}{\Gamma(n)} = \frac{t \mu(t) \erfc(\sqrt{n}\mu(t))}{\sqrt{2} (t-1)}\Bigl(1+\landauO*{n^{-1/2}}\Bigr), \quad \mu(t):=\sqrt{t-\log(t)-1}, 
    \end{equation}
   which holds uniformly in \(t>1\), and note that 
    \begin{equation}
        \abs{z}^2 = 1 + \frac{\sqrt{\gamma}+(x+y^2)/\sqrt{\gamma}}{\sqrt{n}} + \frac{(\gamma+x)^2}{4\gamma n} \ge  1 + \frac{\sqrt{\gamma}+(x+y^2)/\sqrt{\gamma}}{\sqrt{n}}.
    \end{equation}
    Then, for~\cref{diag large} we use \(\erfc(x)\lesssim e^{-x^2}/x\) to estimate
    \begin{equation}
        \begin{split}
            \frac{1}{(\gamma n)^{3/4}} \wt K_n(z,z)  \lesssim \frac{n^{1/4}}{\gamma^{5/4}}\abs{z}^2 e^{-n\mu(\abs{z}^2)^2}\le \frac{n^{1/4}}{\gamma^{5/4}}\abs{z}^2 e^{-\gamma(1-\sqrt{\gamma/n})/2} e^{-(x+y^2)/3}
        \end{split}
    \end{equation}
    using the elementary bound \(t-\log t-1\ge\delta(1-\delta)(t-1)/2\) for \(t\ge 1+\delta\) and \(\delta\in[0,1)\) implying 
    \begin{equation}\label{mu ineq}
        \mu(\abs{z}^2)^2=\abs{z}^2-2\log\abs{z}-1 \ge \frac{\gamma+x+y^2}{2n}\Bigl(1-\sqrt{\frac{\gamma}{n}}\Bigr) \ge \frac{\gamma}{2n}\Bigl(1-\sqrt{\frac{\gamma}{n}}\Bigr) + \frac{x+y^2}{3n}
    \end{equation}
    due to \(\gamma/n\ll 1\) in the last step. Now~\cref{diag large} follows from 
    \begin{equation}\label{e gamma asymp}
        e^{-\gamma/2} = \exp\Bigl(-\frac{1}{4}\log \frac{n}{2\pi^4(\log n)^5}\Bigr) = \frac{2^{1/4}\pi (\log n)^{5/4}}{n^{1/4}} = \frac{2^{3/2}\pi \gamma^{5/4}}{n^{1/4}} \Bigl(1+\landauO*{\frac{\log\log n}{\log n}}\Bigr). 
    \end{equation}
    
    For~\cref{off diag far} we first note 
    \begin{equation}
        z \ov{w} = 1 +  \frac{\sqrt{\gamma}+\bigl(\frac{x_1+x_2}{2}+y_1y_2\bigr)/\sqrt{\gamma}}{\sqrt{n}} + \ii \frac{y_1-y_2}{(\gamma n)^{1/4}} +  \ii\frac{y_1(\gamma+x_2/\gamma)-y_2(\gamma+x_1/\gamma)}{(\gamma n)^{3/4}},
    \end{equation}
    and hence \(\abs{1-z\ov w}\gtrsim (\abs{y_1-y_2}(n/\gamma)^{1/4}+\sqrt{\gamma})/\sqrt{n}\). Now we use the asymptotics~\cite[Lemma 3.4]{MR3211006}
    \begin{equation}
        \frac{\Gamma(n,nz\ov{w})}{\Gamma(n)} = e^{-nz\ov{w}} \frac{e^{n}(z\ov{w})^n }{\sqrt{2\pi n}(1-z\ov{w})} \biggl(1+\landauO*{\frac{1}{n\abs{1-z\ov{w}}^2}}\biggr) 
    \end{equation}
    to estimate 
    \begin{equation}
        \begin{split}
            \frac{\abs[\big]{\wt K_n(z,w)}}{(n\gamma)^{3/4}} & \lesssim \frac{n^{1/4}e^{n(1-\abs{z}^2/2-\abs{w}^2/2+\log\abs{z\ov w})}}{\gamma^{3/4}(\abs{y_1-y_2}(n/\gamma)^{1/4}+\sqrt{\gamma})}\biggl(1+\landauO*{\frac{\sqrt{\gamma}}{C^2}}\biggr) .
        \end{split}
    \end{equation}
    In the exponent we use 
    \begin{equation}
        \begin{split}
            1-\abs{z}^2/2-\abs{w}^2/2+\log\abs{z} + \log \abs{w} &= -\frac{(\abs{z}^2-1)^2}{4}-\frac{(\abs{w}^2-1)^2}{4} + \landauO*{(\gamma/n)^{3/2}} \\
            &= - \frac{\gamma +  x_1 +  y_2^2 + x_2 + y_2^2}{2n} + \landauO*{\frac{1+\abs{\bm x}^2+\abs{\bm y}^4}{n\gamma}}
        \end{split}
    \end{equation}
    to conclude 
    \begin{equation}
        \frac{\abs[\big]{\wt K_n(z,w)}}{(n\gamma)^{3/4}} \lesssim \frac{\sqrt\gamma e^{-(x_1+y_1^2)/2}e^{-(x_2+y_2^2)/2}}{\abs{y_1-y_2}(n/\gamma)^{1/4}+\sqrt{\gamma}}  \biggl(1+\landauO*{\frac{\sqrt{\gamma}}{C^2}+\frac{\log\log n+x^2+y^4}{\log n}}\biggr).
    \end{equation}
    
    It remains to consider~\cref{off diag close} where we use~\cite[Lemma 3.3]{MR3211006} in the form 
        \begin{equation}\label{gamma mu cplx}
        \frac{\Gamma(n,nz\ov{w})}{\Gamma(n)} = \frac{z\ov w \mu(z\ov w) \erfc(\sqrt{n}\mu(z\ov w))}{\sqrt{2} (z\ov w-1)}\Bigl(1+\landauO*{\frac{1}{n\abs{1-z\ov w}} }\Bigr), \quad \mu(z):=\sqrt{z-\log(z)-1}.
    \end{equation}
   We use the Taylor expansion \(\mu(1+z)=z/\sqrt{2}+\landauO{\abs{z}^2}\)
    (for small enough \(\abs{z}\)) and the asymptotics~\cite[Eq.~(7.12.1)]{DLMF} of the error function \(\erfc(z)=e^{-z^2}/(\sqrt{\pi}z)(1+\landauO*{\abs{z}^{-2}})\) for \(\abs{\arg z}<3\pi/4\) to obtain
    \begin{equation}
        \begin{split}
            \frac{\Gamma(n,nz\ov{w})}{\Gamma(n)} &=  \frac{e^{-n(z\ov w-1)^2/2 }}{\sqrt{2\pi}\sqrt{n}(z\ov w-1)} \biggl(1+\landauO*{\abs{z\ov w-1}+n\abs{z\ov w-1}^3+\frac{1}{n\abs{z\ov w-1}^2}}\biggr) 
        \end{split}
    \end{equation}
    and thereby
    \begin{equation}
        \begin{split}
            \frac{\abs[\big]{\wt K_n(z,w)}^2}{4(n\gamma)^{3/2}}  &= \frac{n^{1/2}}{\gamma^{3/2}(2\pi)^{3}} \frac{e^{-\gamma-x_1-x_2-y_1^2-y_2^2}  }{\gamma+\sqrt{n/\gamma}(y_1-y_2)^2}\biggl(1+\landauO*{\frac{1+\abs{\bm x}^2+\abs{\bm y}^4}{\gamma}}\biggr) \\
            & = \frac{\gamma e^{-x_1-x_2-y_1^2-y_2^2}}{\pi(\gamma+\sqrt{n/\gamma}(y_1-y_2)^2)}  \biggl(1+\landauO*{\frac{\log \log n+\abs{\bm x}^2+\abs{\bm y}^4}{\log n}}\biggr).
        \end{split}
    \end{equation}
 Here we also used the upper bound on \(\abs{y_1-y_2}\le n^{1/10}n^{-1/4}\)
  in order to estimate \(\sqrt{\gamma/n}\lesssim\abs{1-z\ov w}\lesssim n^{-1/2}(\sqrt{\gamma}+n^{1/10}/\gamma^{1/4})\).
\end{proof} 

\newcommand{\cor}{\color{red}}

\section{Real Ginibre}
We now consider the real case. The analogue of~\cref{prod g Fredholm} for test functions \(g\colon\C\to[0,1]\) invariant under complex conjugation, \(g(\ov z)=g(z)\), and vanishing on the real line, \(g(x)=0\), \(x\in\R\), is given by~\cite{MR3211006} 
\begin{equation}\label{prod g Fredholm real}
    \begin{split}
        \E \prod_{i=1}^n (1-g(\sigma_i)) & = %
        \bigl[\det(1-\sqrt{g}K_n^{\C,\C}\sqrt{g})\bigr]^{1/2},
    \end{split}
\end{equation}
where 
\begin{equation}\label{realkernel}
    K_n^{\C,\C}(z,w):=\begin{pmatrix}
        S_n(z,w)&-\ii S_n(z,\ov w)\\-\ii S_n(\ov z,w)& S_n(w,z)
    \end{pmatrix}
\end{equation}
with
\begin{equation}
    \begin{split}
        S_n(z,w):={}& \frac{\ii  n e^{-n(z-\ov{w})^2/2}}{\sqrt{2\pi}} \sqrt{n}(\ov{w}-z)\sqrt{\erfc(\sqrt{2n}\abs{\Im z})\erfc(\sqrt{2n}\abs{\Im w})} e^{-nz\ov w} K_{n}(z,w)\\
        ={}&\Phi_n(z,w)  \wt K_n(z,w) \\
        \Phi_n(z,w):={}&e^{n(\abs{z}^2+\abs{w}^2-2z\ov w)/2}\frac{\ii \sqrt{\pi} e^{-n(z-\ov{w})^2/2}}{\sqrt{2}} \sqrt{n}(\ov{w}-z)\sqrt{\erfc(\sqrt{2n}\abs{\Im z})\erfc(\sqrt{2n}\abs{\Im w})}.
    \end{split}
\end{equation}
The analogue to~\cref{gumbel lemma} is the following result. 
\begin{proposition}\label{gumbel lemma real}
    Let \(\abs{t}\le \sqrt{\log n}/10\), let \(A(t)\) be as in~\cref{A def} and recall $\gamma=\gamma_n$ from~\cref{Cmax}. 
    Consider any function \(g\colon\C\to[0,1]\) supported on  \(\supp g\subset A(t)\) that is 
     symmetric  in the sense \(g(z)=g(\ov z)\), %
     and let \(n\) be large enough such
     that \(\gamma>0\). Then we have
         \begin{equation}\label{eq tr K statement real}
        \begin{split}
            \Tr \sqrt{g}K_n^{\C,\C} \sqrt{g} &= 2\int_{t}^\infty \int_0^\infty g(z) \frac{e^{-x-y^2}}{\sqrt{\pi}} \dif y\dif x + \landauO*{e^{-t}\frac{(\log\log n)^2+\abs{t}^2}{\log n}}, \\
            z&=1+\sqrt{\frac{\gamma}{4n}}+\frac{x}{\sqrt{4\gamma n}} + \frac{\ii y}{(\gamma n)^{1/4}}
        \end{split}
    \end{equation}
    and 
    \begin{equation}\label{eq hs K statement real}
        \norm{\sqrt{g} K_n^{\C,\C} \sqrt{g}}_2 \lesssim e^{-\sqrt{\log n}/32}.
    \end{equation}
    The unspecified constants in $\lesssim$ and $\landauO*{\cdot}$ are uniform in $n$ and  in $\abs{t}\le \sqrt{\log n}/10$.
\end{proposition}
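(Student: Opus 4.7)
\emph{Plan.} The proof is structurally parallel to that of~\cref{gumbel lemma} for the complex case, with the factorisation $S_n=\Phi_n\wt K_n$ letting us reuse the complex-kernel asymptotics of~\cref{Kernel lemma}. The only new ingredient is control of the prefactor $\Phi_n$, which carries the $\erfc$ factors distinguishing the real from the complex Ginibre kernel. The restriction of the spatial integration to $\HC$ combines with the assumed symmetry $g(\ov z)=g(z)$ to account for the factor of two in~\cref{eq tr K statement real}, via the identity $2\int_\HC f(z)\dif^2 z=\int_\C f(z)\dif^2 z$ valid for $f(\ov z)=f(z)$.

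\emph{Trace asymptotic.} Since the matrix trace of $K_n^{\C,\C}(z,z)$ equals $2S_n(z,z)$, we have
\[
\Tr\sqrt{g}\,K_n^{\C,\C}\sqrt g=2\int_\HC \Phi_n(z,z)\,\wt K_n(z,z)\,g(z)\dif^2 z.
\]
A direct calculation yields $\Phi_n(z,z)=\sqrt{\pi}\,u\,e^{u^2}\erfc(u)$ with $u=\sqrt{2n}\abs{\Im z}$, and the standard asymptotic of $\erfc$ gives $\Phi_n(z,z)=1+\landauO*{u^{-2}}$ for $u\ge 1$. In the rescaled variables of~\cref{varchange} this is $\Phi_n(z,z)=1+\landauO*{\sqrt{\gamma/n}/y^2}$ throughout the bulk region $y\gtrsim(\gamma/n)^{1/4}$, while the complementary thin boundary layer near the real axis contributes only $\landauO*{(\gamma/n)^{1/4}}$ to the trace since $\abs{\Phi_n(z,z)}\le 1$ uniformly. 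Using the additional symmetries $\wt K_n(z,z)=\wt K_n(\ov z,\ov z)$ and $g(\ov z)=g(z)$ to fold $2\int_\HC$ into $\int_\C$, the computation reduces to the complex-case identity~\cref{tr K cplx}, producing the claimed asymptotic with the same effective error.

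\emph{Hilbert--Schmidt bound.} Expanding the $2\times 2$ matrix Hilbert--Schmidt norm gives four scalar integrals of $\abs{S_n(\cdot,\cdot)}^2 g(z)g(w)$ over $\HC\times\HC$ with arguments from $\{z,\ov z\}\times\{w,\ov w\}$; reflecting across the real axis and using $g(\ov z)=g(z)$ one obtains
\[
\norm{\sqrt g\,K_n^{\C,\C}\sqrt g}_2^2\lesssim \iint_{\C\times\C}\abs{\Phi_n(z,w)}^2\abs{\wt K_n(z,w)}^2\,g(z)g(w)\dif^2 z\dif^2 w.
\]
The Mills-type inequality $\sqrt\pi\,u\,e^{u^2}\erfc(u)\le 1$ yields the uniform bound $\abs{\Phi_n(z,w)}^2\le\abs{\ov w-z}^2/(4\abs{\Im z}\abs{\Im w})$, which in the rescaled bulk amounts to a factor $\landauO{1+y_1/y_2+y_2/y_1}$. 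Away from the boundary layer $\min(\abs{y_1},\abs{y_2})\gtrsim 1$ this is $\landauO{1}$ and the estimate reduces essentially verbatim to~\cref{HS large cplx,small}. Inside the boundary layer one instead uses the alternative estimate $\abs{\Phi_n(z,w)}^2\le (\pi n/2)\abs{\ov w-z}^2$ (from $\erfc\le 1$); here the $\sqrt{n/\gamma}(y_1-y_2)^2$ denominator in~\cref{off diag far} of~\cref{Kernel lemma} is large precisely when one of the $y_i$ is small while the other is of order one, and together with the measure $\landauO*{(\gamma/n)^{1/4}}$ of the layer this more than compensates for the enlarged prefactor. The region $\abs{\bm x}+\abs{\bm y}^2>\sqrt{\log n}/2$ is handled exactly as in~\cref{HS large cplx}.

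\emph{Main obstacle.} The only genuinely new difficulty is the control of the off-diagonal prefactor $\Phi_n(z,w)$: whereas $\Phi_n(z,z)$ is bounded by one uniformly and tends to one in the bulk, $\abs{\Phi_n(z,w)}$ can blow up along directions where the ratio $\abs{\Im z}/\abs{\Im w}$ degenerates. The delicate step is therefore the boundary-layer Hilbert--Schmidt analysis, in which the singularity of $\Phi_n$ has to be balanced against the $(y_1-y_2)^2$ factor inherited from the complex off-diagonal kernel asymptotics~\cref{off diag far} to preserve the target bound $e^{-\sqrt{\log n}/32}$.
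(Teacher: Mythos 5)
Your proposal follows essentially the same route as the paper: factor $S_n=\Phi_n\wt K_n$ so that the complex‑kernel asymptotics of~\cref{Kernel lemma} can be reused, approximate $\Phi_n(z,z)\approx 1$ away from the real axis for the trace, and control $|\Phi_n(z,w)|$ by elementary $\erfc$ bounds for the Hilbert--Schmidt estimate. The only organisational difference is that where you split explicitly into a bulk and a boundary‑layer regime for $\Phi_n(z,w)$, the paper uses the single interpolating bound $|\Phi_n(z,w)|^2\lesssim n|z-\ov w|^2/\bigl[(1\vee\sqrt n\,\Im z)(1\vee\sqrt n\,\Im w)\bigr]$, which is valid uniformly across all scales (and becomes $(\gamma^{1/4}\vee n^{1/4}y_1)(\gamma^{1/4}\vee n^{1/4}y_2)$ in the denominator after the rescaling~\cref{varchange}); both implementations lead to the same cancellation against the $\gamma+\sqrt{n/\gamma}(y_1-y_2)^2$ denominator of~\cref{off diag far}.

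Two small imprecisions worth flagging. (i) Your ``alternative estimate'' $|\Phi_n(z,w)|^2\le(\pi n/2)|\ov w-z|^2$ does not follow from $\erfc\le 1$ alone: since $|\Phi_n(z,w)|^2=\tfrac{\pi}{2}n|\ov w-z|^2\,e^{2n(\Im z)^2+2n(\Im w)^2}\erfc(\sqrt{2n}|\Im z|)\erfc(\sqrt{2n}|\Im w|)$, you need $\erfc(u)\le e^{-u^2}$ (equivalently $e^{u^2}\erfc(u)\le 1$) to cancel the exponential prefactors; $\erfc\le 1$ is not enough. (ii) The prefactor $1+y_1/y_2+y_2/y_1$ is not $\cO(1)$ over the whole bulk region $\min(y_1,y_2)\gtrsim 1$, since $\max(y_1,y_2)$ can reach $\sim(\log n)^{1/4}$; this polynomial growth is harmless because it is absorbed by the Gaussian $e^{-y_1^2-y_2^2}$ from~\cref{off diag close,off diag far}, but the statement should be phrased accordingly. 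Neither point changes the conclusion.
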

\begin{proof}
    We estimate 
    \begin{equation}\label{eq Phi diag est}
        \begin{split}
            \Phi_n(z,z) = \sqrt{\pi} e^{2n (\Im z)^2} \sqrt{2n}\Im z \erfc(\sqrt{2n}\abs{\Im z})  =1 + \landauO*{\min\set*{1,\frac{1}{n(\Im z)^2}}},
        \end{split}
    \end{equation}
    where we used the asymptotic \(\erfc(x)=e^{-x^2}/(\sqrt{\pi}x)(1+\landauO*{x^{-2}})\) and the bound \(\erfc(x)\le e^{-x^2}/(\sqrt{\pi}x)\). Thus the tracial computation essentially reduces to the complex case~\cref{tr K cplx} and we obtain 
    \begin{equation}
        \begin{split}
            \Tr \sqrt{g} K_n^{\C,\C}(z,z) \sqrt{g} &= 2 \int_{A(t)_+} g(z)  S_n(z,z) \dif^2 z \\
            &= 2 \int_{A(t)_+} g(z) \wt K_n(z,z)\bm1(\Im z>n^{-5/12})\dif^2 z \Bigl(1+\landauO*{n^{-5/12}}\Bigr) \\
            &\qquad + \landauO*{ \int_{A(t)_+} \wt K_n(z,z)\bm1(\Im z\le n^{-5/12})\dif^2 z } \\
            &= 2\int_{t}^\infty \int_{0}^\infty \frac{e^{-x-y^2}}{\sqrt{\pi}} g(z)\dif y \dif x + \landauO*{e^{-t}\frac{(\log\log n)^2+\abs{t}^2}{\log n}},
        \end{split}
    \end{equation} 
    where \(A(t)_+:=A(t)\cap\HC\), we parametrized $z$ with $x, y$ as in~\cref{eq tr  K statement}, and we used~\cref{off diag close,offdiag large}.
    
    For the Hilbert-Schmidt norm we estimate, analogously to~\cref{HS large cplx}
    \begin{equation}
        \begin{split}
            \norm{\sqrt{g}K_n^{\C,\C}\sqrt{g}}_2 &= \iint g(z)g(w) \Tr K_n^{\C,\C}(z,w)K_n^{\C,\C}(w,z)\dif^2 z\dif^2 w \\
            &\le \iint_{\Re\ge t} \Tr  K_n^{\C,\C}(z,w)K_n^{\C,\C}(w,z) \bm1(\abs{\vx}+\abs{\vy}^2\le \frac{\sqrt{\log n}}{2}) \dif^2 z\dif^2 w \\
            &\quad + \int \Tr  K_n^{\C,\C}(z,z)\bm1(\abs{x}+y^2>\frac{\sqrt{\log n}}{4}) \dif^2 z\\
            &\lesssim \iint_{\Re\ge t} \abs{S_n(z,w)}^2 \bm1(\abs{\vx}+\abs{\vy}^2\le \frac{\sqrt{\log n}}{2}) \dif^2 z\dif^2 w  + e^{-\sqrt{\log n}/16},
        \end{split}
    \end{equation} 
    where we used that the integrals of \(\abs{S_n(z,w)}^2\) and \(\abs{S_n(z,\ov w)}^2\) 
    are equal
    by symmetry of the integration region, and \(\Re\ge t\) indicates the integration region \(\set{\Re z\ge t}\cap\set{\Re w\ge t}\). Now we use~\cref{off diag close,off diag far} together with  the elementary bound
       \begin{equation}
        \abs{\Phi_n(z,w)}^2\lesssim \frac{n\abs{z-\ov w}^2}{(1\vee \sqrt{n}\Im z)(1\vee \sqrt{n}\Im w)} \lesssim \frac{(x_1-x_2)^2/\sqrt{\gamma}+\sqrt{n}(y_1+y_2)^2}{(\gamma^{1/4}\vee n^{1/4}y_1)(\gamma^{1/4}\vee n^{1/4}y_2)}
    \end{equation}
    to estimate 
    \begin{equation}
        \begin{split}
            \frac{\abs{S_n(z,w)}^2}{(\gamma n)^{3/2}}\lesssim \frac{\gamma e^{-x_1-x_2-y_1^2-y_2^2}}{\gamma+\sqrt{n/\gamma}(y_1-y_2)^2}  \frac{(x_1-x_2)^2/\sqrt{\gamma}+\sqrt{n}(y_1+y_2)^2}{(\gamma^{1/4}\vee n^{1/4}y_1)(\gamma^{1/4}\vee n^{1/4}y_2)}
        \end{split}
    \end{equation}
   and conclude, similarly to~\cref{small}, that  
    \begin{equation}
        \begin{split}
            \iint_{\Re\ge t} \abs{S_n(z,w)}^2 \bm1(\abs{\vx}+\abs{\vy}^2\le \frac{\sqrt{\log n}}{2}) \dif^2 z\dif^2 w \lesssim e^{-2t}n^{-1/6} \gamma^{3/2}.
        \end{split}
    \end{equation} 
\end{proof}

As a consequence of~\cref{det tr,prod g Fredholm real,gumbel lemma real} and we obtain that for any fixed \(t\) it holds that 
\begin{equation}\label{Gumbel real eq}
    \begin{split}
        \Prob\Bigl(\max_{i\colon \sigma_i\not\in\R}\Re\sigma_i< 1 + \sqrt{\frac{\gamma}{4n}}+\frac{t}{\sqrt{4\gamma n}} \Bigr)
        &=\Prob\Bigl(\sigma_1,\ldots,\sigma_n\in \R\cup [\C\setminus (A(t)\cup\R)]\Bigr) \\
        &= \bigl[ \det(1-\chi_{A(t)} K_n^{\C,\C} \chi_{A(t)})\bigr]^{1/2} \xrightarrow{n\to\infty} e^{-e^{-t}/2}
    \end{split}
\end{equation}
with \(\chi_A\) denoting the characteristic function of the set \(A\), using that \(\int_0^\infty e^{-y^2}\dif y=\sqrt{\pi}/2\). Moreover,  for any symmetric function \(f\colon\C\to[0,\infty)\) supported in \(A(t)\) we also have that 
\begin{equation}\label{Poisson real eq}
    \begin{split}
        \E \exp\Bigl(-\sum_{i\colon \sigma_i\not\in\R}  f(\sigma_i)\Bigr) &= \det\Bigl(1-\sqrt{1-e^{-f}} K_n^{\C,\C} \sqrt{1-e^{-f}}\Bigr)^{1/2}\\ 
        &\xrightarrow{n\to\infty} \exp\Bigl(-\int_{t}^\infty\int_0^\infty (1-e^{-f(z)}) \frac{e^{-x-y^2}}{\sqrt{\pi}}\dif y\dif x\Bigr).
    \end{split}
\end{equation}

In order to complete the proof of~\cref{thm gumbel,thm poisson} it remains to estimate the real eigenvalues. 
However, the real eigenvalues affect neither of these results since the largest real eigenvalue lives on a smaller scale, $1+ \landauO{1/\sqrt{n}}$, than the largest real part of complex eigenvalues, $1+ \landauO{\sqrt{\log n/n}}$.
 Indeed, the main result of~\cite{MR3678474} is that for large \(t\)
\begin{equation}\label{Gumbel real real eq}
    \lim_{n\to\infty}\Prob\Bigl(\max_{i\colon\sigma_i\in\R}\sigma_i\le 1+\frac{t}{\sqrt{n}}\Bigr) = 1-\frac{1}{4}\erfc(t) + \landauO*{e^{-2t^2}}.
\end{equation}
Together with~\cref{Gumbel real eq,Poisson real eq} this concludes the proof of~\cref{thm gumbel,thm poisson} also in the real case.

\printbibliography%
\end{document}